\documentclass[12pt,oneside]{amsart}
\usepackage{amssymb, amsmath, amsthm}

\usepackage{epsfig}
\usepackage{epstopdf}
\usepackage{graphicx}
\usepackage{color}
\usepackage{enumerate}
\usepackage{amsrefs}

\usepackage[normalem]{ulem}

\usepackage[normalem]{ulem}

\usepackage{pinlabel}

\theoremstyle{plain}
\newtheorem{theorem}{Theorem}[section]

\newtheorem*{theorem*}{Theorem}
\newtheorem*{theorem-DisjtSS}{Theorem \ref{Thm: Disjt SS}}
\newtheorem*{theorem-EssentialTorus}{Theorem \ref{Thm: Essential Torus}}
\newtheorem*{cor-ScharlWu}{Corollary \ref{Cor-ScharlWu}}
\newtheorem*{corollary-MSC 2}{Corollary \ref{Cor: MSC 2}}
\newtheorem*{theorem-Main A}{Theorem \ref{Thm: Main A}}
\newtheorem*{theorem-Main B}{Theorem \ref{Thm: Main Thm B}}
\newtheorem*{Cor-Unknotting}{Theorem \ref{Cor: Prime Unknotting 1}}
\newtheorem*{Cor-genusbandsum}{Theorem \ref{Thm: Genus superadd}}
\newtheorem*{Cor-bandsumscc}{Corollary \ref{Cor: Band Sums CC}}

\newtheorem{proposition}[theorem]{Proposition}

\newtheorem{lemma}[theorem]{Lemma}
\newtheorem{definition}[theorem]{Definition}

\theoremstyle{definition}





\newcommand{\ra}{\rightarrow}
\newcommand{\x}{\times}


      \makeatletter
      \def\@setcopyright{}
      \def\serieslogo@{}
      \makeatother

\begin{document}

   \title[Idempotents in Tangle Categories Split]{Idempotents in Tangle Categories Split}
   \author{Ryan Blair and Joshua Sack}
   \email{}
   \thanks{}

\begin{abstract}
In this paper we use 3-manifold techniques to illuminate the structure of the category of tangles. In particular, we show that every idempotent morphism $A$ in such a category naturally splits as $A=B\circ C$ such that $C\circ B$ is an identity morphism.
\end{abstract}

\maketitle
\date{\today}

\section{Introduction}

An \emph{idempotent} of a category is a morphism that is idempotent with respect to composition, i.e.\ a morphism $f$ such that $f = f\circ f$.
Idempotents have significance to quantum observations or measurements \cite{Selinger08}, can reflect self-replication in biology (such as DNA) \cite{Kauff04}, and can form building blocks for numerous algebraic structures \cite{HaqKauffman}.
An idempotent $f$ \emph{splits} if there are morphisms $g$ and $h$ such that $f=g\circ h$, but $h\circ g$ is the identity morphism.
By direct inspection, one can see that any morphism $f$ with such a property is idempotent (if $h\circ g$ is the identity, then $(g\circ h)\circ (g\circ h) = g\circ (h\circ g)\circ h = g\circ h$); but in many categories, not all idempotents split.
A category where every idempotent splits is called \emph{Karoubi complete}.
Idempotent splitting may adopt significance from various interpretations of the categories involved.
For example, Selinger studied idempotents of dagger categories, and described in \cite[Remark 3.5]{Selinger08} how the splitting of idempotents may clarify data types. 
In \cite{Kauff04}, Kauffman related idempotents to DNA replication, and saw the idempotents in a Karoubi complete category as appealing models for self-replicators.

We show that the category of unoriented tangles up to isotopy is Karoubi complete.
Objects of this category are points in the disc $D^2$, morphisms are properly embedded 1-manifolds in $D^2\times I$ (these are the tangles), and the morphism composition is achieved via a stacking operation.
Categories of tangles were studied in \cite{Yetter} to understand the combinatorial structure of tangle composition, and various categories of tangles are classified in \cite{Freyd89} as certain types of braided pivotal categories.
A related category, called the Temperley-Lieb category, can be described similarly, but with $D^2$ replaced by $I$, and hence the category of tangles we consider is a natural generalization of the Temperley-Lieb category.
It was shown in \cite{Abramsky07} that the Temperley-Lieb category is Karoubi complete.

Although the main result of this paper extends that in \cite{Abramsky07}, the techniques in this paper are different and are rather inspired by the proof of the prime decomposition theorem for string links provided in \cite{BBK}. The main technical tool in the current paper, as well as in \cite{BBK}, is a bound, established in \cite{FF}, on the number of non-parallel essential surfaces in a compact 3-manifold. The paper is organized as follows. In Section \ref{defs} we precisely define the tangle category. In Section \ref{incompressible} we review incompressible punctured surfaces and their properties. In Section \ref{braid} we adapt to tangles the notion of braid-equivalence for string links in \cite{BBK} and apply this idea to factoring morphisms as the composition of two morphisms. In Section \ref{main} we prove that all idempotents in the category of tangles split.

\section{Acknowledgments}

The authors would like to thank Michael Peterson for many useful conversations.

\section{Idempotents in the Category of Tangles}\label{defs}

Let $\mathcal{T}$ be the category of tangles. The definition of $\mathcal{T}$ that we give here is essentially equivalent to the definition of the \emph{category of unoriented tangles up to isotopy}, denoted $\mathbb{TANG}$, in \cite{Freyd89}. The objects of $\mathcal{T}$ are the natural numbers. Each natural number $n$ is identified with $n$ distinct fixed points $\{x_1, x_2,...,x_n\}$ in $D^2$. The morphisms of $\mathcal{T}$ are \emph{tangles}. A tangle is a pair $(D^2 \times I,A)$ such that $A$ is a properly embedded compact 1-manifold in $D^2 \times I$ with the following conditions: the boundary of each arc (connected component with non-trivial boundary) of $A$ is contained in $(D^2\times \{0\})\cup (D^2\times \{1\})$; the intersection of $A$ with the lower and upper boundaries of $D^2\times I$ are $A\cap (D^2\times \{0\})=\{(x_1,0),(x_2,0),...,(x_n,0)\}$ and $A\cap (D^2\times \{1\})=\{(x_1,1),(x_2,1),...,(x_m,1)\}$; and for each boundary point $(x_i,0)$ or $(x_i,1)$ of $A$ and each sufficiently small neighborhood of that point, the derivatives of all orders of the embedding agree with the maps $t \mapsto (x_i, t)$. For simplicity, we will occasionally refer to the tangle $(D^2 \times I,A)$ as the morphism $A$. We denote $D^2 \times \{1\}$ by $\partial_+ (D^2 \times I)$  and $D^2 \times \{0\}$ by $\partial_- (D^2 \times I)$.

\begin{definition}[Tangle equivalence]
\label{StringLinkEquivalence}
Tangles $(D^2 \times I,A)$ and $(D^2 \times I,B)$ are equivalent if there is an isotopy of $D^2 \times I$ fixing $\partial (D^2 \times I)$ that takes $A$ to $B$.  In this case, (by abuse of notation) we will write $A = B$.
\end{definition}


Let $h:D^2 \times I \ra I$ be the projection map onto the second coordinate. A \emph{braid} is a tangle that is equivalent to a tangle $(D^2 \times I, A)$ with the property that for every component $\alpha$ of $A$, the restriction of $h$ to $\alpha$ is a smooth, one-to-one and onto function with no critical points in its domain. Given tangle $(M,A)$ from $k$ to $l$ and tangle $(N,B)$ from $l$ to $m$, with $M=N=D^2\times I$, denote the \emph{composition} of these morphisms by $(D^2 \times I, A\circ B)$ which is the quotient of $M \cup N$ achieved by gluing $\partial_+ (M)$ to $\partial_- (N)$ via the map $(x,1)\mapsto (x,0)$, and $A\circ B$ is the properly embedded 1-manifold in the quotient which is the image of $A \cup B$ under this identification. The resulting quotient of $M \cup N$ is again homeomorphic to $D^2 \times I$ and we choose to identify the image of $M$ under the quotient map with $D^2 \x [0,1/2]$ and the image of $N$ with $D^2 \x [1/2,1]$ in the obvious ways. Again, by abuse of notation, we will write the resulting tangle as the morphism $A\circ B$ and consider it the composition of morphisms $A$ and $B$. See Figure \ref{StringlinkStacking}.


\begin{figure}[h!]
\begin{picture}(250,315)
\put(11,11){\includegraphics[scale=.5]{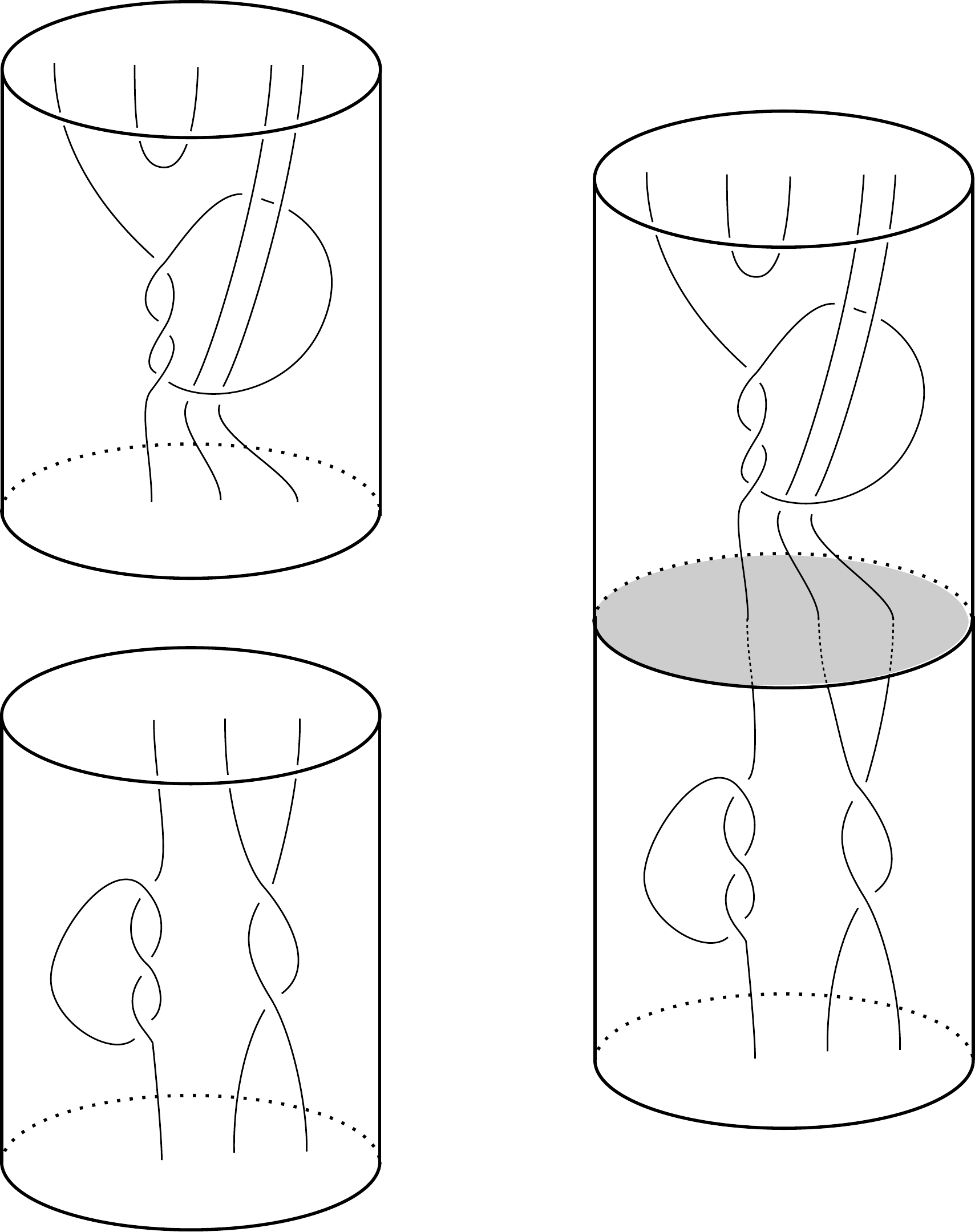}}
\put(0,80){$A$}
\put(0, 240){$B$}
\put(195, 25){$A \circ B$}
\put(145,160){$F$}
\put(55,0){$\partial_-$}
\put(55,163){$\partial_+$}
\end{picture}
\caption{The composition of tangles}
\label{StringlinkStacking}
\end{figure}

We illustrate in Figure \ref{idempotent} an \emph{idempotent} in the category of tangles, i.e.\ a morphism $A$ of $\mathcal{T}$ such that $A\circ A = A$.
Note that if $A$ is a braid and an idempotent then $A$ is an identity morphism, since braids form a group under composition. 
Recall that a category is \emph{Karoubi complete} if all of its idempotents split, where an idempotent $A$ \emph{splits} if there exist morphisms $C$ and $B$ such that $A=C\circ B$ and $B\circ C = \mathit{Id}_n$ is an identity morphism. Our main theorem is the following.
 
 \begin{theorem}\label{thm:main}
 The category $\mathcal{T}$ of tangles is Karoubi complete.
 \end{theorem}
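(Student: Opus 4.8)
The plan is to exploit idempotency to manufacture an unbounded supply of disjoint essential surfaces inside a single fixed $3$-manifold, and then to invoke the finiteness bound of \cite{FF} to force two of them to be parallel; the product region between such a parallel pair will be the identity braid that witnesses the splitting. First I would use $A = A\circ A$ to obtain $A = A^{\circ k}$ for every $k\ge 1$, and realize $A^{\circ k}$ as a stack of $k$ copies of $A$ in $D^2\times I$ separated by the $k-1$ horizontal slicing discs $F_1,\dots,F_{k-1}$, each a disc meeting the tangle transversally in the points of the object. Because $A^{\circ k}=A$, the tangle complement $X = (D^2\times I)\setminus N(A^{\circ k})$ is homeomorphic to the fixed complement of $A$, independent of $k$, and the $F_i$ become disjoint properly embedded punctured discs in $X$. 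Since a braid that is idempotent is already an identity, I may assume $A$ has turnbacks, so the $F_i$ need not be incompressible; using the theory of incompressible punctured surfaces from Section~\ref{incompressible} I would compress and $\partial$-compress them, keeping them disjoint, separating and planar, to arrive at essential slicing discs meeting $A$ in a ``stable'' number $p$ of points.

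At this stage I have, for arbitrarily large $k$, a family of more than any fixed number of disjoint, pairwise essential slicing discs in the single manifold $X$. The bound of \cite{FF} on the number of pairwise non-parallel essential surfaces therefore applies: for $k$ sufficiently large two of these discs, $\Sigma_a$ and $\Sigma_b$ with $a<b$, must be parallel, and by the self-similarity of the stack I can take them one period apart. Two parallel slicing discs cobound a product region $\Sigma\times I$ in $X$, and the strands of $A$ running through this region trace out $\{p \text{ points}\}\times I$; hence the sub-tangle of $A^{\circ k}=A$ caught between $\Sigma_a$ and $\Sigma_b$ is the trivial braid $\mathit{Id}_p$.

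Finally I would read off the splitting. Cutting $A$ along $\Sigma_a$ and $\Sigma_b$ and letting $C$ be the part below $\Sigma_a$ and $B$ the part above $\Sigma_b$, the product region collapses to give $A = C\circ \mathit{Id}_p\circ B = C\circ B$ with $C\co n\to p$ and $B\co p\to n$. Running the same product region across the periodic boundary of the stack---where the tangle between consecutive stable discs is exactly $B$ stacked below $C$, that is $B\circ C$---identifies $B\circ C$ with a product region as well, so $B\circ C = \mathit{Id}_p$, which is precisely a splitting of $A$. The main obstacle I anticipate is the bookkeeping that makes the finiteness bound usable and then converts geometry into algebra: namely, showing the slicing discs can genuinely be made \emph{essential} discs (controlling compressions, boundary-compressions, and any closed summands that split off, so that planarity and the correct boundary pattern survive), and then upgrading ``$\Sigma_a$ parallel to $\Sigma_b$'' to the exact equation $B\circ C=\mathit{Id}_p$. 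This last step is where the braid-equivalence and factoring framework of Section~\ref{braid} should do the work, and where I would have to be careful that all isotopies fix $\partial(D^2\times I)$ and create no extra closed components.
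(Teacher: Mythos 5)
Your overall strategy---use idempotency to stack many copies of $A$, apply the Freedman--Freedman bound to force two slicing discs to be parallel, and read the splitting off the product region---is the same as the paper's, but two of your steps have genuine gaps. The most serious is the claim that the product region between the parallel discs $\Sigma_a$ and $\Sigma_b$ forces the intermediate tangle to be the trivial braid $\mathit{Id}_p$. Parallelism in the sense of \cite{FF} (i.e.\ in the exterior of $A$) only says the strands are vertical with respect to \emph{some} product structure on the region between the discs; that makes the intermediate tangle a braid, but does not identify which braid. Indeed, the exterior of any braid in $D^2\times I$ is a product, so the top and bottom punctured discs of a full twist are parallel even though the braid is nontrivial. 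The paper closes exactly this gap algebraically: parallelism gives that $(C\circ B)^l$, and hence $C\circ B$, is a braid, and then $A\circ A=A$ yields $(C\circ B)^3=(C\circ B)^2$, which forces $C\circ B$ to be the identity because braids form a group under composition. Without such a step your argument only produces a factorization with the middle composite a braid, which is not yet a splitting. Relatedly, ``by the self-similarity of the stack I can take them one period apart'' is not justified: the finiteness theorem hands you \emph{some} parallel pair, $l$ periods apart, and you must handle $(B\circ C)^l$ for arbitrary $l\geq 1$ (the group-theoretic step does this).

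The second gap is in how you produce the essential discs. Compressing and $\partial$-compressing the horizontal slicing discs $F_1,\dots,F_{k-1}$ may yield essential punctured discs, but nothing in your construction guarantees that they all meet $A$ in the same ``stable'' number $p$ of points, that they remain pairwise disjoint decomposing discs after the compressions, or---crucially---that the tangle between consecutive ones is one and the same tangle $B\circ C$ (periodicity of the decomposition). You need all of this both to invoke \cite{FF} with a uniform Betti-number bound and to convert parallelism into an equation about $B$ and $C$. The paper avoids these issues by taking a \emph{minimal} decomposing disk $F$ for $A$ and proving it essential (Lemma \ref{ess}, whose proof needs its own application of Theorem \ref{Freedman} to show that $\partial_+(D^2\times I)$ or $\partial_-(D^2\times I)$ is compressible, so a minimal disk cannot be boundary parallel), then placing the copy of $F$ inside each copy of $A$ in the stack; disjointness, essentiality, the uniform puncture bound, and periodicity are then automatic. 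If you replace your compression procedure by the minimal-disk lemma and add the braid-group step above, your outline becomes the paper's proof.
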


 \begin{figure}[h!]
\begin{picture}(230,280)
\put(0,0){\includegraphics[scale=.5]{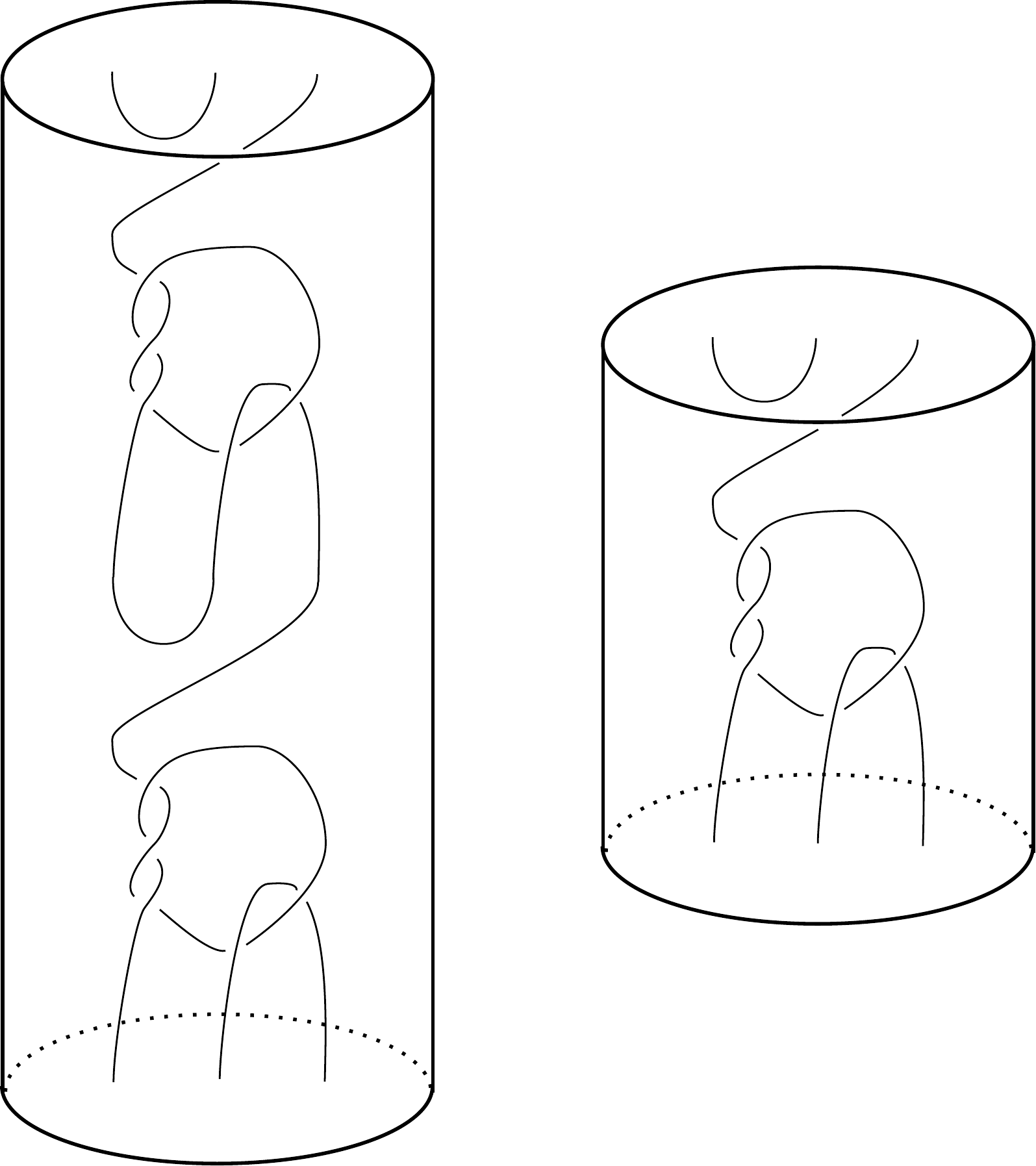}}
\put(113,125){$\cong$}
\end{picture}
\caption{An idempotent morphism in the category of tangles}
\label{idempotent}
\end{figure}

\section{Incompressible punctured surfaces}\label{incompressible}

Our primary tool in the classification of idempotents in $\mathcal{T}$ will be the study of punctured surfaces up to transverse isotopy. Unless otherwise stated all manifolds are compact. Suppose $\alpha$ is a 1-manifold properly embedded in a 3-manifold $M$. If $F$ is a properly embedded surface in $M$ which meets $\alpha$ transversely in $k$ points, we say $F$ is \emph{$k$-punctured}. An isotopy $\phi_t$ of $F$ in $M$ is \emph{proper} if its restriction to $\partial F\times I$ is an isotopy of $\partial F$ in $\partial M$. 
Moreover, the isotopy $\phi_t$ is \emph{transverse} to $\alpha$ if the embedding $\phi_t$ is transverse to $\alpha$ for all fixed values of $t$. Isotopies of surfaces in this paper will always be proper isotopies that are transverse to the relevant 1-manifolds.

If $\alpha$ is a 1-manifold properly embedded in $M\cong D^2 \times I$ and $F$ is a properly embedded $k$-punctured surface in $M$, $F$ is \emph{boundary-parallel} either if $F$ is a $2$-punctured $2$-sphere bounding a $3$-ball that meets $\alpha$ in an unknotted arc or if there is a proper, transverse isotopy of $F$ in $M$ which takes $F$ to a punctured subsurface contained in $\partial M$. Otherwise, we say $F$ is \emph{non-boundary parallel}. A loop $\gamma$ embedded in $F$ is \emph{essential} if it does not bound a 0-punctured disk in $F$. The $k$-punctured surface $F$ is \emph{compressible} in $(M,\alpha)$ (or just \emph{compressible} when context is understood)  if $F$ is a $0$-punctured $2$-sphere bounding a $3$-ball or if
there exists a disk $D$ embedded in $M$ such that $D \cap F=\partial D$, $\partial D$ is essential in $F$ and $D$ is disjoint from $\alpha$. Such a disk is called a \emph{compressing disk}.  Otherwise, we say $F$ is \emph{incompressible}. 
A punctured surface $F$ is \emph{essential} if $F$ is incompressible and non-boundary parallel.

Given a 1-manifold $\alpha$ properly embedded in $M\cong D^2 \times I$ and $F$ a properly embedded $k$-punctured surface with compressing disk $D$, we can \emph{compress} $F$ along $D$ to form a new embedded $k$-punctured surface $F^*$. See Figure \ref{compress.fig}. Let $D^2\times I$ be a fibered neighborhood of $D$ in $M$ such that $D=D^2\times \{\frac{1}{2}\}$, $D^2\times I$ is disjoint from $\alpha$, and $\partial(D^2) \times I$ is an embedded annulus in $F$ that is disjoint from the punctures of $F$. Then we define $F^*$ to be a surface transversely isotopic to $(F\setminus (\partial(D^2) \times I)) \cup (D^2\times \{0,1\})$.

\begin{figure}
	\includegraphics[width=4in]{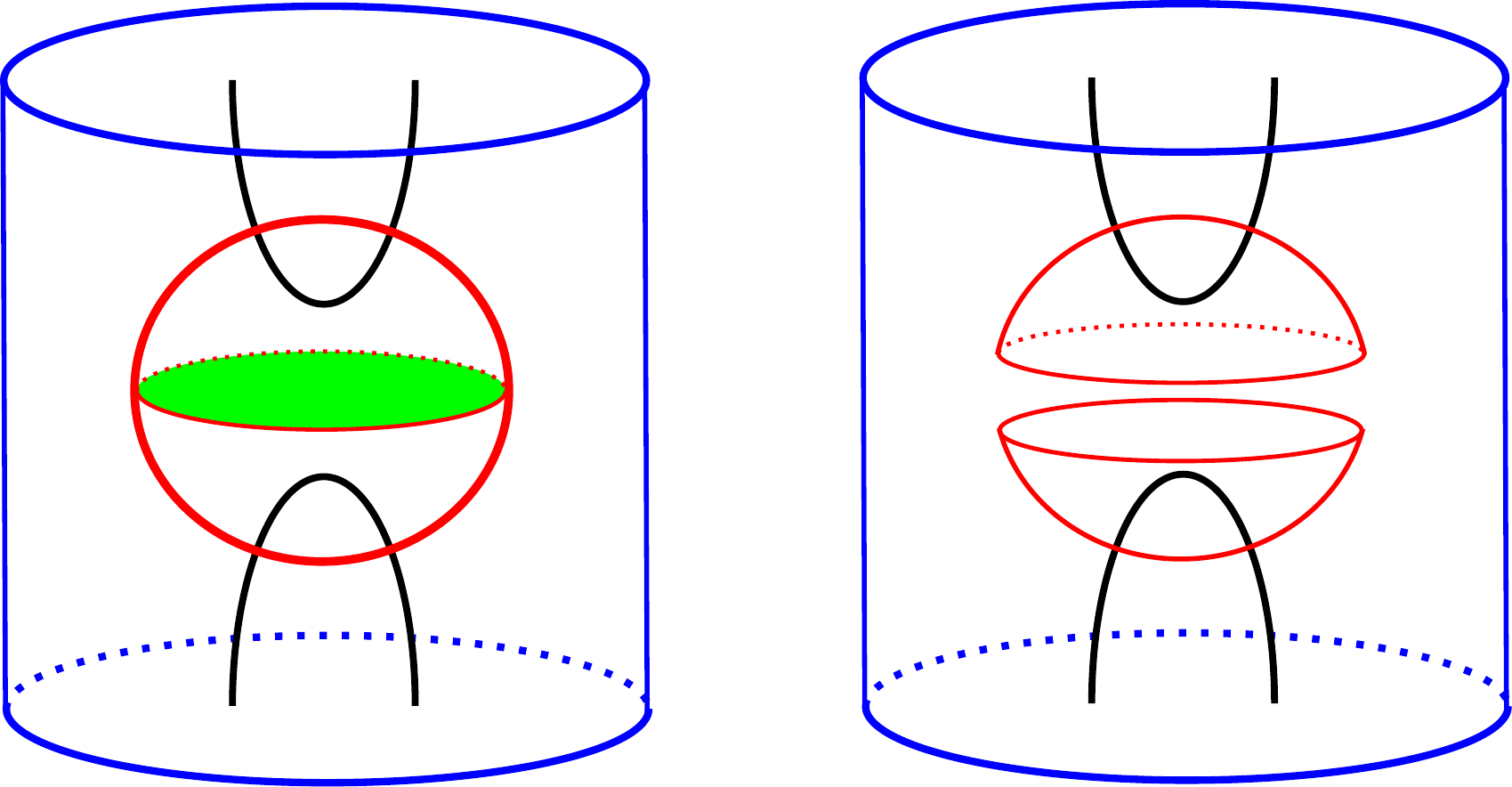}
	\caption{An example of compressing a 4-punctured sphere to obtain two boundary-parallel 2-punctured spheres.}
	\label{compress.fig}
	\end{figure}

Although we take the point of view of incompressible and non-boundary parallel punctured surfaces in this paper, we could have equivalently adopted the point of view of studying incompressible and non-boundary parallel meridional surfaces properly embedded in the exterior of $\alpha$ in $M$. In particular, if $F$ is a properly embedded non-boundary parallel punctured surface in $(D^2\times I, \alpha)$ and $\eta(\alpha)$ is a small open regular neighborhood of $\alpha$ in $D^2\times I$, then $F\setminus \eta(\alpha)$ is non-boundary parallel in $(D^2\times I)\setminus \eta(\alpha)$. Similarly, if $F$ is a properly embedded incompressible punctured surface in $(D^2\times I, \alpha)$ and $\eta(\alpha)$ is a small open regular neighborhood of $\alpha$ in $D^2\times I$, then $F\setminus \eta(\alpha)$ is incompressible in $(D^2\times I)\setminus \eta(\alpha)$. In particular, we will make use of the following Theorem of Freedman and Freedman.

\begin{theorem}\label{Freedman}~\cite{FF} Let $M$ be a compact 3-manifold with boundary and $b$ an integer greater than
zero. There is a constant $c(M, b)$ so that if $F_1, . . . , F_k$, $k > c$, is a collection of incompressible
surfaces such that all the Betti numbers $b_{1}(F_{i}) < b$, $1 \leq i \leq k$, and no $F_i$, $1 \leq i \leq k$, is a boundary
parallel annulus or a boundary parallel disk, then at least two members $F_i$ and $F_j$ are parallel.
\end{theorem}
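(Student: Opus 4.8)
The plan is to route the whole argument through Haken's theory of normal surfaces and then run a Kneser-style counting argument, using the Betti-number hypothesis only to control complexity. First I would fix a triangulation $T$ of $M$ with some number $t$ of tetrahedra. The classical normalization theorem guarantees that an incompressible (and suitably boundary-incompressible) properly embedded surface in a triangulated $3$-manifold can be properly isotoped into \emph{normal form} with respect to $T$, meeting each tetrahedron in a disjoint union of finitely many types of normal triangles and normal quadrilaterals. So the first step is to isotope each $F_i$ to a normal surface. The hypotheses that every $F_i$ is incompressible and that none is a boundary-parallel disk or annulus are precisely what ensure that normalization does not collapse any $F_i$ to a trivial piece, so no surface is lost in this step.

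The next step is to record each normal surface by its vector of normal coordinates, which counts the normal disks of each type in each tetrahedron. The key geometric fact driving the count is that a stack of parallel normal disks of the same type sweeps out a product region, and more globally, if two normal surfaces have coordinate vectors that agree after discarding such parallel (product) families, then they cobound a region homeomorphic to $F\times I$ in $M$ and are therefore parallel. Thus the heart of the matter is to bound, in terms of $t$ and $b$, the number of normal surfaces up to this product-equivalence, and then invoke the pigeonhole principle: if $k$ exceeds that bound $c(M,b)$, two of the $F_i$ fall in the same class and so are parallel.

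To obtain such a bound I would use the Betti-number hypothesis to control Euler characteristic: $b_1(F_i)<b$ bounds $|\chi(F_i)|$, which in turn bounds the number of normal disks that do \emph{not} belong to a product family, following Kneser's observation that once the total number of normal pieces is large relative to $t$, all but boundedly many of them come in parallel families that contribute zero Euler characteristic. Consequently the ``essential'' combinatorial data of a normal surface, modulo product regions, ranges over a finite set whose size is a function of $t$ and $b$ alone. This upgrades Kneser's finiteness for spheres and disks to the bounded-genus statement required here.

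I expect the main obstacle to lie in this last bookkeeping. A normal surface can have arbitrarily large weight while keeping $b_1$ bounded (think of many parallel normal triangles), so the bound must be phrased in terms of non-product complexity rather than raw weight, and one must verify carefully that two surfaces sharing their non-product data genuinely cobound a product $F\times I$ rather than some more complicated region; excluding the latter is exactly where incompressibility and irreducibility of the ambient pieces are used. A secondary technical point is that the $F_i$ are not assumed pairwise disjoint, so one either arranges disjoint normal representatives or argues directly at the level of coordinate vectors. I note that the original proof in \cite{FF} proceeds by a different, more group-theoretic route, but the normal-surface argument above is the self-contained approach I would take.
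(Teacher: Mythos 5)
This statement is not proved in the paper at all: it is quoted verbatim from Freedman--Freedman and used as a black box (the paper even supplies their definition of \emph{parallel} right afterwards). So the only real question is whether your sketch would actually establish the result, and I do not think it does in the stated generality. What you describe is the classical Kneser--Haken finiteness argument, which proves a genuinely weaker theorem.

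Two concrete gaps. First, the normalization step: an incompressible properly embedded surface in a triangulated $3$-manifold can be isotoped to a normal surface only under additional hypotheses --- essentially that the surface is also boundary-incompressible and that $M$ is irreducible and boundary-irreducible --- because Haken's weight-reducing procedure uses compressions \emph{and} boundary-compressions, and a $\partial$-compressible $F_i$ can be destroyed rather than normalized. Theorem \ref{Freedman} assumes none of this: the $F_i$ need not be $\partial$-incompressible and $M$ need not be irreducible or $\partial$-irreducible; removing those hypotheses is precisely the content of \cite{FF}. Second, the pigeonhole step is where the theorem actually lives. The Kneser count shows that between two surfaces with the same ``non-product'' normal data the complementary region is built from parallelity pieces, hence is an $I$-bundle; but an $I$-bundle is not automatically a product $F\times I$ with the two surfaces as its horizontal boundary --- it can be twisted, its horizontal boundary can consist of pieces of a single $F_i$, and its vertical boundary interacts with $\partial M$ in ways that, absent $\partial$-irreducibility, do not yield parallelism. (The need to exclude boundary-parallel disks and annuli by hypothesis is a symptom of exactly this failure.) Your own caveats flag both difficulties, but resolving them is the theorem, not a bookkeeping detail; Freedman and Freedman circumvent the breakdown of the normal-surface count by the group-theoretic argument you allude to. As a proof of Theorem \ref{Freedman} as stated, the proposal therefore has an essential gap, though it would correctly prove the classical special case (incompressible, $\partial$-incompressible surfaces in an irreducible, $\partial$-irreducible $M$).
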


Note that Freedman and Freedman define two disjoint properly embedded surfaces $F_i$ and $F_j$ in a compact 3-manifold $M$ to be \emph{parallel} if $F_i\cup F_j$ cobound a product $F\times I$ in $M$ such that $\partial F\times I \subset \partial M$, $F_i=F\times \{0\}$ and $F_j=F\times \{1\}$.

\section{Decomposing Disks and Braid Equivalence}\label{braid}
Decomposing a morphism in $\mathcal{T}$ as a composition of two morphisms involves some amount of choice. This choice can be captured via the notion of braid-equivalence.

\begin{definition}
\label{BraidEquivalenceDef}
Two tangles $(D^2 \times I, A)$ and $(D^2 \times I, B)$  are \emph{braid-equivalent} if there exist braids $C_1$ and $C_2$ such that $A=C_{1}\circ B\circ C_{2}$.
\end{definition}

\begin{proposition}
\label{BraidEquivalenceProp}
Tangles  $(D^2 \times I, T_1)$ and $(D^2 \times I, T_2)$ are braid-equivalent if and only if there is an isotopy of $D^2 \times I$ which fixes $(\partial D^2) \x I$ and which takes $T_1$ to $T_2$.
\end{proposition}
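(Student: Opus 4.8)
The plan is to prove both directions of the biconditional. Recall the setup: $A = C_1 \circ B \circ C_2$ means we stack braids above and below $B$, and two tangles are equivalent (in the sense of Definition \ref{StringLinkEquivalence}) when related by an isotopy of $D^2 \times I$ fixing \emph{all} of $\partial(D^2 \times I)$. Braid-equivalence should be the weaker relation where we only require the isotopy to fix the side boundary $(\partial D^2) \times I$, allowing the endpoints on $\partial_-$ and $\partial_+$ to move within the top and bottom discs.

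\begin{proof}[Proof sketch]
For the forward direction, suppose $T_1 = C_1 \circ T_2 \circ C_2$ with $C_1, C_2$ braids. The key observation is that a braid $(D^2 \times I, C)$, by the no-critical-points condition on $h$, is isotopic rel $(\partial D^2)\times I$ to a tangle each of whose strands is the graph of a path in $D^2$; such a tangle is realized by an ambient isotopy of $D^2 \times I$ that is the identity on the side boundary and whose effect on $D^2 \times \{1\}$ is a diffeomorphism of the disc permuting and dragging the marked points to their images under $C$. I would make this precise by constructing, from the braid $C$, a level-preserving isotopy $\Phi_t(x,s) = (\phi_{s}^t(x), s)$ that ``unwinds'' $C$ to the trivial tangle; the crucial point is that this isotopy need not fix $D^2 \times \{1\}$ pointwise, only $(\partial D^2)\times I$. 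Composing the unwinding isotopies for $C_1$ and $C_2$ (rescaled to act on the appropriate sub-cylinders $D^2 \times [1/2,1]$ and $D^2\times[0,1/2]$, then stretched back) yields an isotopy of $D^2 \times I$ fixing $(\partial D^2)\times I$ that carries $T_1$ to $T_2$.

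For the converse, suppose $\Psi$ is an isotopy of $D^2 \times I$ fixing $(\partial D^2)\times I$ and carrying $T_1$ to $T_2$. Its restriction to $\partial_-(D^2\times I)$ and $\partial_+(D^2\times I)$ traces out two loops of diffeomorphisms of $D^2$ fixing $\partial D^2$, i.e.\ two paths in the diffeomorphism group starting at the identity. The plan is to extract braids from these boundary motions: the track of the marked points $\{x_1,\dots,x_n\}$ under the bottom-face isotopy sweeps out a tangle in $D^2 \times I$ each of whose strands is monotone in the $I$-direction, hence a braid $C_2$, and similarly the top-face motion gives a braid $C_1$. I would then argue that pre- and post-composing $T_2$ by these braids exactly records the discrepancy between an isotopy fixing only the side boundary and one fixing all of $\partial(D^2\times I)$: concretely, $\Psi$ followed by the reverse boundary-collar isotopies (which push the boundary motion inward as braids) is an isotopy fixing all of $\partial(D^2\times I)$, so it realizes a genuine tangle equivalence, giving $T_1 = C_1 \circ T_2 \circ C_2$.

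The main obstacle I anticipate is the careful bookkeeping in the converse: turning an ambient isotopy that moves the boundary points into explicit braid factors requires using the isotopy extension theorem to convert the boundary tracks into a collar isotopy, and one must verify that the resulting swept-out tangles genuinely satisfy the no-critical-point (monotonicity) condition defining a braid, rather than merely being isotopic to something monotone. A secondary technical point is matching the smoothness/derivative conditions at the marked endpoints imposed in the definition of a tangle, so that all intermediate objects are bona fide morphisms of $\mathcal{T}$; I expect this to be routine once the isotopies are chosen to be level-preserving near the two horizontal boundary discs.
\end{proof}
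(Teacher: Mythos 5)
Your sketch is correct and is essentially the argument the paper is invoking: the paper gives no details here, deferring entirely to Proposition 3.6 of \cite{BBK}, and the proof there is exactly this standard correspondence between braids and isotopies of $D^2\times I$ fixing $(\partial D^2)\times I$ (unwinding braids by level-preserving ambient isotopies in one direction; in the other, using isotopy extension and a collar correction to convert the tracks of the marked points under the top- and bottom-disc motions into the braid factors $C_1$ and $C_2$). The only nitpicks are bookkeeping: the restrictions of the ambient isotopy to $\partial_\pm(D^2\times I)$ are paths, not loops, of diffeomorphisms, and with the paper's stacking convention $C_1$ occupies the lower sub-cylinder in $C_1\circ B\circ C_2$, so your assignment of $C_1,C_2$ to $D^2\times[1/2,1]$ and $D^2\times[0,1/2]$ is reversed; neither affects the argument.
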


\begin{proof} 
This follows from a nearly identical adaptation of the proof of Proposition 3.6 of \cite{BBK}.
\end{proof}

\begin{definition}
A \emph{decomposing disk} for a tangle $(D^2 \times I, A)$ is a punctured disk which is properly embedded in $D^2 \times I$,
whose boundary is isotopic in $\partial(D^2 \x I)$ to $\partial (\partial_+ (D^2 \times I))$. See the disk $F$ in Figure \ref{StringlinkStacking}.
\end{definition}

A decomposing disk $F$ for a tangle $(D^2 \times I, A)$ separates
$D^2 \times I$ into two connected components, one containing $\partial_- (D^2 \times I)$ and the other containing $\partial_+ (D^2 \times I)$.  The closure of each component is homeomorphic to $D^2 \x I$, so $F$ decomposes $(D^2 \times I, A)$ into two tangles, each of which is well-defined up to composition with braids (cf.~\emph{braid-equivalence} in Definition \ref{BraidEquivalenceDef}).  If $(D^2 \times I, B)$ is the tangle resulting from restricting $A$ to the side of $F$ in $D^2 \times I$ that contains $\partial_- (D^2 \times I)$ and $(D^2 \times I, C)$ is the tangle resulting from restricting $A$ to the side of $F$ in $D^2 \times I$ that contains $\partial_+ (D^2 \times I)$, we say that $F$ \emph{decomposes $(D^2 \times I, A)$ as $(D^2 \times I, B\circ C)$}, or, more simply, $A=B\circ C$.

\section{The Proof}\label{main}

\begin{definition}
A decomposing disk $F$ for a tangle $(D^2 \times I, A)$ is \emph{minimal} if there is no decomposing disk $G$ such that $|F\cap A|>|G\cap A|$. See Figure \ref{idempotent.fig}.
\end{definition}

\begin{figure}
	\includegraphics[width=1in]{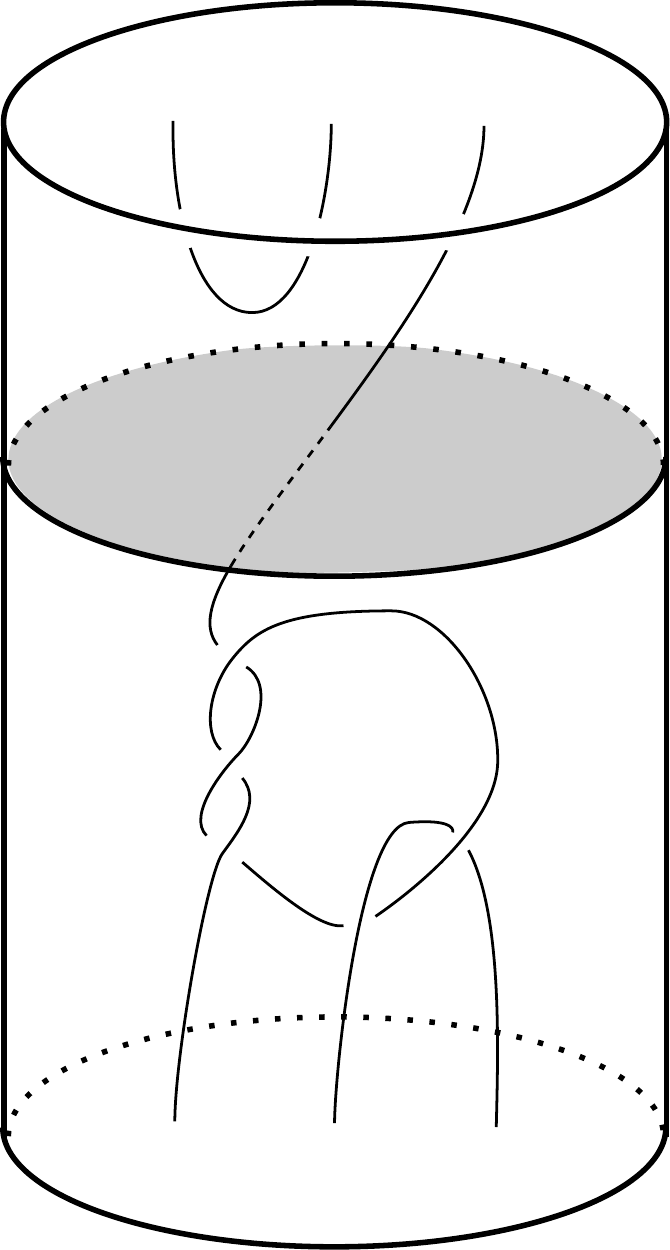}
	\caption{A minimal decomposing disk for an idempotent tangle.}
	\label{idempotent.fig}
	\end{figure}



\begin{lemma}\label{ess}
If $(D^2 \times I, A)$ is an idempotent, then either $(D^2 \times I, A)$ is an identity morphism or any minimal decomposing disk is essential.
\end{lemma}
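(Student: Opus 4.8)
The plan is to establish the two defining properties of an essential surface separately—incompressibility and non-boundary-parallelism—reserving the hypothesis of idempotency for the second.

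First I would dispose of incompressibility using minimality alone. Let $F$ be a minimal decomposing disk and suppose it were compressible. Since $F$ is a disk it is not a $0$-punctured $2$-sphere, so there is a compressing disk $D$ with $D\cap F=\partial D$, with $\partial D$ essential in $F$, and with $D$ disjoint from $A$. As $F$ is a disk, $\partial D$ bounds a subdisk $F'\subset F$, and essentiality of $\partial D$ forces $F'$ to contain at least one puncture. Compressing $F$ along $D$ replaces the annular region $F\setminus F'$ together with a parallel copy of $D$ by a new disk $F^{*}$ with the same boundary as $F$; since $D$ is disjoint from $A$, this $F^{*}$ is again a decomposing disk, but with $|F^{*}\cap A|=|F\cap A|-|F'\cap A|<|F\cap A|$, contradicting minimality. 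Hence every minimal decomposing disk is incompressible, whether or not $A$ is idempotent.

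For non-boundary-parallelism I would first reduce to a statement about puncture counts. Writing $A$ as a morphism from $n$ to $n$, pushing $\partial_+(D^2\times I)$ slightly into the interior produces, by the collar condition on $A$ near the boundary, a decomposing disk meeting $A$ in exactly $n$ points, so the minimal number of punctures is at most $n$. On the other hand a disk is boundary-parallel precisely when it can be properly and transversely isotoped onto $\partial_+(D^2\times I)$ or $\partial_-(D^2\times I)$, each of which meets $A$ in exactly $n$ points; since a transverse isotopy introduces no tangencies it preserves the intersection number, so a boundary-parallel decomposing disk has exactly $n$ punctures. Thus if a minimal decomposing disk is boundary-parallel, the minimal puncture number equals $n$ and every decomposing disk meets $A$ in at least $n$ points. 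It therefore suffices to prove that an idempotent all of whose decomposing disks have at least $n$ punctures is an identity.

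The heart of the argument, and the step I expect to be the main obstacle, is this last implication, where idempotency and Theorem \ref{Freedman} enter. Assume $n\ge 2$ (the cases $n\le 1$ are immediate, since a single arc or a nonempty link cannot be idempotent unless it is trivial). Using $A\circ A=A$ repeatedly, for any $k$ I would realize $A$ as the $k$-fold stack $A^{k}$ and consider the $k-1$ horizontal level disks $D^2\times\{i/k\}$ for $1\le i\le k-1$. Each is a decomposing disk meeting the tangle in exactly $n$ points, hence, under our standing assumption that the minimum is $n$, minimal and so incompressible by the first part; they are pairwise disjoint $n$-punctured planar disks, so in the exterior of $A$ they become disjoint incompressible planar surfaces of uniformly bounded first Betti number, none of which is a boundary-parallel annulus or disk once $n\ge 2$. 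Transporting this configuration into the fixed pair $(D^2\times I, A)$ via the isotopy realizing $A^{k}\cong A$, and taking $k-1$ larger than the Freedman--Freedman constant $c$, Theorem \ref{Freedman} yields two of these disks that are parallel, cobounding a product region. Since the region of $A^{k}$ between the level disks $D^2\times\{i/k\}$ and $D^2\times\{j/k\}$ is a copy of $A^{\,j-i}=A$, this product region exhibits $A$ as a braid; and an idempotent braid is an identity morphism, as noted in Section \ref{defs}. The delicate points to get right are that the level disks really are incompressible and remain so after transport, that Freedman--Freedman parallelism for the exterior surfaces translates back to a product structure of the tangle pair, and that the product region is correctly identified with the power $A^{\,j-i}=A$; once these are in place, boundary-parallelism of a minimal disk forces $A$ to be an identity, completing the proof.
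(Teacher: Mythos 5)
Your proof is correct, and it uses the same two key tools as the paper (the compression-reduces-punctures trick for incompressibility, and Theorem \ref{Freedman} applied to the stack of interface disks coming from $A=A^{k}$), but it is organized along a genuinely different route. The paper first proves, as a separate claim, that $\partial_+(D^2\times I)$ or $\partial_-(D^2\times I)$ must be compressible in $(D^2\times I,A)$ when $A$ is not a braid (this is where it invokes Theorem \ref{Freedman}, applied to the interface disks of $A^{c+2}$, whose incompressibility it deduces from the assumed incompressibility of $\partial_\pm$); it then compresses that boundary disk once to exhibit a decomposing disk with strictly fewer than $n$ punctures, which rules out boundary-parallel minimal disks, and only afterwards gets incompressibility of a minimal disk. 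You instead bypass the boundary-compressibility claim entirely: you prove incompressibility of minimal disks first, then argue in contrapositive form that if a minimal disk were boundary parallel the minimal puncture number would be $n$, so the level disks of $A^{k}$ are themselves minimal, hence incompressible by your first step, and Theorem \ref{Freedman} then forces two of them to be parallel, exhibiting $A$ as a braid and hence as an identity. This is arguably more streamlined, since incompressibility of the Freedman collection comes for free from minimality rather than from an auxiliary hypothesis on $\partial_\pm$. One caveat: your dismissal of $n\le 1$ as ``immediate'' is thinner than it should be. The case $n=0$ is indeed immediate (component counting), but $n=1$ requires knowing that a knotted arc cannot satisfy $A\circ A=A$, which rests on a classical fact such as additivity of knot genus under connected sum (equivalently Schubert's theorem for long knots); the paper instead absorbs this case into the Freedman argument by observing that a boundary-parallel once-punctured disk would, via the isotopy extension theorem, make $A$ a trivial braid on one strand. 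Either device works, but the step deserves an explicit justification rather than the word ``immediate.''
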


\begin{proof}
Assume that $(D^2 \times I, A)$ is an idempotent. If $(D^2 \times I, A)$ is a braid, then, as braids form a group, $A$ is an identity morphism. So, we may assume that $(D^2 \times I, A)$ is not a braid. If $A$ contains $l$ distinct closed loops, then, since $A$ is idempotent, $A$ must contain $2l$ distinct closed loops, a contradiction unless $l=0$. Hence, we can assume that $A$ contains no closed loops.

Since $(D^2 \times I, A)$ is an idempotent, then $(D^2 \times I, A)$ is a morphism from $n$ points to $n$ points for some $n$. Since $A$ is non-empty, $n\geq 1$. By Theorem \ref{Freedman}, there is an integer $c$ such that if $F_1, . . . , F_k$ is a collection of disjoint incompressible decomposing disks in $D^2 \times I$ that each meet $A$ in at most $n$ points (hence the first Betti number of each $F_i$ is bounded above by $n$) and $k>c$, then at least two members $F_i$ and $F_j$ are parallel  or one of the surfaces $F_1, . . . , F_k$ is a boundary parallel 0-punctured disk or a boundary parallel 1-punctured disk.

\emph{Claim}: $\partial_+ (D^2 \times I)$ or $\partial_- (D^2 \times I)$ is compressible in $(D^2 \times I, A)$.\footnote{Observe that in Figure~\ref{idempotent} $\partial_+(D^2\times I)$ is compresible.}

\emph{Proof of claim}: Since $(D^2 \times I, A)$ is equivalent to $(D^2 \times I, A\circ A)$, then $(D^2 \times I, A)$ is equivalent to $(D^2 \times I, A^{c+2})$. Hence, we can find $c+1$ pairwise decomposing disks, $F_1, . . . , F_{c+1}$, for $(D^2 \times I, A)$ that decompose $(D^2 \times I, A)$ into $c+2$ copies of $(D^2 \times I, A)$. If both $\partial_+ (D^2 \times I)$ and $\partial_- (D^2 \times I)$ are incompressible in $(D^2 \times I, A)$, then each of $F_1, . . . , F_{c+1}$ are incompressible in $(D^2 \times I, A)$. Since $n\geq 1$, then none of the surfaces $F_1, . . . , F_{c+1}$ is a 0-punctured disk. Moreover, if any of the surfaces was a boundary parallel once punctured disk (i.e. a boundary parallel annulus in the exterior of $A$), then, by the isotopy extension theorem \cite{Palais60}, $A$ would be a trivial braid on one strand. Thus, the collection $F_1, . . . , F_{c+1}$ meets the hypothesis of Theorem \ref{Freedman} and there exist two members $F_i$ and $F_j$ that are parallel. The tangle between $F_i$ and $F_j$ in $D^2 \times I$ is braid-equivalent to $(D^2 \times I, A^l)$ for some $l\geq 1$; however, since $F_i$ is parallel to $F_j$, then $A^l$ and, thus, $A$ is a braid, a contradiction. Hence, one of $\partial_+ (D^2 \times I)$ or $\partial_- (D^2 \times I)$ is compressible in $(D^2 \times I, A)$. $\square$

Without loss of generality, suppose that $\partial_+ (D^2 \times I)$ is compressible in $(D^2 \times I, A)$. Compressing $\partial_+ (D^2 \times I)$ once results in a surface with two connected components. One component is a decomposing disk that meets $A$ in strictly fewer than $n$ points and the other component is a punctured sphere. 
Note that any boundary parallel decomposing disk would be properly, transversely isotopic to $\partial_+ (D^2 \times I)$ or $\partial_- (D^2 \times I)$ in $(D^2 \times I, A)$, and hence meets $A$ in $n$ points.
Since we have found a decomposing disk that meets $A$ in strictly fewer than $n$ points, then a minimal decomposing disk cannot be boundary parallel. 

Let $F$ be a minimal decomposing disk for $(D^2 \times I, A)$.  By the above argument, $F$ is non-boundary parallel. Next, we show that $F$ is incompressible. If $F$ is compressible, then compressing $F$ once results in a surface with two connected components. One component is a decomposing disk that meets $A$ in fewer points than $F$ does. This is a contradiction to $F$ being minimal. Hence, $F$ is incompressible. Since $F$ is both incompressible and non-boundary parallel, then $F$ is essential.
\end{proof}

We now restate and prove our main theorem (Theorem~\ref{thm:main}).
\begin{theorem}
If $(D^2 \times I, A)$ is an idempotent, then there exist $B$ and $C$, such that $A=B\circ C$ and $C\circ B$ is an identity morphism.
\end{theorem}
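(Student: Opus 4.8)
The plan is to split into the two cases furnished by Lemma~\ref{ess}. If $(D^2\times I,A)$ is an identity morphism, it splits trivially: taking $B=C=A$ gives $A=B\circ C$ and $C\circ B=A\circ A=A$, an identity. So assume $A$ is not an identity. By Lemma~\ref{ess} every minimal decomposing disk is essential. Fix one such disk $F$, meeting $A$ in $p$ points, and let it decompose $A$ as $A=B\circ C$, where $B$ is the tangle below $F$ and $C$ the tangle above $F$. Set $E=C\circ B$, a tangle from $p$ to $p$. Since $A\circ A=A$, stacking gives $A=A^{k}=(B\circ C)^{k}=B\circ E^{\,k-1}\circ C$ for every $k\ge 1$. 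The whole problem then reduces to showing that $E$ is an identity morphism: in that case $A=B\circ C$ with $C\circ B=E=\mathit{Id}_p$ is exactly the desired splitting.

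To get a handle on $E$, I would spread out many copies of $F$. Because $A$ is isotopic to $A^{m}$, the $m$ copies of $F$ inside $A^{m}$ furnish $m$ pairwise disjoint decomposing disks $F_1,\dots,F_m$ for a tangle isotopic to $A$, each meeting $A$ in exactly $p$ points. Since the minimal number of punctures of a decomposing disk is an isotopy invariant, each $F_k$ is itself minimal, hence essential by Lemma~\ref{ess}; in particular no $F_k$ is a boundary-parallel disk or annulus, and each is a $p$-punctured disk with $b_1(F_k)=p$. Applying Theorem~\ref{Freedman} with any $b>p$ and choosing $m$ larger than the resulting constant $c(D^2\times I,b)$, at least two of these disks, say $F_i$ and $F_j$ with $i<j$, are parallel and cobound a product region $R$ in $D^2\times I$.

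The crux is to extract from this parallelism that $E$ is trivial. The region $R$ between $F_i$ and $F_j$ is, up to braid-equivalence, exactly $E^{\,j-i}=(C\circ B)^{j-i}$, since the portion of $A^{m}$ between consecutive disks $F_k$ and $F_{k+1}$ is a copy of $C\circ B$. As $R$ is a product, it is $(D^2,\,p\text{ points})\times I$; in particular the $1$-manifold inside $R$ has no closed loops, caps, or cups, so it is a braid, whence $E^{\,j-i}$ is a braid. The intermediate disks $F_{i+1},\dots,F_{j-1}$ lie inside the product $R$ as incompressible decomposing disks meeting the $p$ strands minimally; by the classification of incompressible surfaces in a product of a planar surface with $I$, each is boundary-parallel, hence parallel to $F_i$. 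Thus every consecutive region is a product and each copy of $E$ is a braid. Finally I would combine this with the torsion-freeness of the braid group: $R$ being a product forces a power of $E$ to be the trivial braid, and a braid with a trivial power is itself trivial, so $E=\mathit{Id}_p$.

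I expect this last step to be the main obstacle. The delicate point is passing from the purely topological statement ``$F_i$ and $F_j$ are parallel'' to the categorical statement ``$C\circ B$ is the \emph{trivial} braid,'' because a decomposing disk determines its two side-tangles only up to composition with braids (Definition~\ref{BraidEquivalenceDef}), and every braid is braid-equivalent to the trivial braid. Care is therefore needed to choose the representatives $B$ and $C$ compatibly with the product structure of $R$, equivalently to rule out any residual braiding of the $p$ through-strands. This is precisely where the torsion-freeness of the braid group, driven by the idempotency relation $A\cong A^{2}$, does the real work, and getting this passage exactly right, rather than merely up to braid-equivalence, is the heart of the argument.
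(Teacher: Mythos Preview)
Your overall architecture matches the paper's: take a minimal decomposing disk $F$, write $A=B\circ C$, spread copies of $F$ through $A^{m}$, and invoke the Freedman--Freedman bound to find two parallel copies $F_i,F_j$. Up through the conclusion that $E^{j-i}=(C\circ B)^{j-i}$ is a braid, your argument and the paper's coincide.

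The gap is in your final step. Parallelism of $F_i$ and $F_j$ tells you only that the region between them is a product of a $p$-punctured disk with $I$; translated back into tangles, this says $E^{j-i}$ is \emph{braid-equivalent} to the trivial tangle, i.e.\ $E^{j-i}$ is \emph{some} braid. It does \emph{not} say that $E^{j-i}$ is the identity braid, because the identifications of $F_i$ and $F_j$ with the standard $p$-punctured disk were fixed in advance when you defined $B$ and $C$, and the product structure produced by parallelism need not match those identifications. So your sentence ``$R$ being a product forces a power of $E$ to be the trivial braid'' is exactly the unjustified leap you yourself flag, and torsion-freeness of the braid group is therefore inapplicable: you never obtain a relation $E^{k}=\mathit{Id}$.

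The paper closes this gap without torsion-freeness and without tracking the product structure any further. Once $C\circ B$ is known to be a braid, simply compose the idempotency relation $B\circ C\circ B\circ C = B\circ C$ on the left by $C$ and on the right by $B$ to obtain $(C\circ B)^{3}=(C\circ B)^{2}$. Since braids on $p$ strands form a group under stacking, one cancellation gives $C\circ B=\mathit{Id}_p$. This is the ``idempotency-driven'' input you allude to, but the mechanism is ordinary group cancellation, not torsion-freeness. Your detour through the intermediate disks $F_{i+1},\dots,F_{j-1}$ to show each individual $E$ is a braid is also unnecessary: if $E^{l}$ is a braid then a through-strand count (every strand of $E^{l}$ runs bottom-to-top, forcing the same for $E$, and closed components in $E$ would persist in $E^{l}$) already gives that $E$ is a braid.
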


\begin{proof}

Let $F$ be a minimal decomposing disk for $(D^2 \times I, A)$. By Lemma \ref{ess}, $F$ is essential. Denote the tangles that $F$ decomposes $(D^2 \times I, A)$ into by $(D^2 \times I, B)$ and $(D^2 \times I, C)$ so that $A=B\circ C$. Note that since $A$ is an idempotent, then $A$ is a morphism from $n$ points to $n$ points for some $n$. Moreover, since $\partial_+ (D^2\times I)$ is a decomposing disk for every $(D^2 \times I, A)$, then $F$ meets $A$ in at most $n$ points.

By Theorem \ref{Freedman}, there is an integer $c$ such that if $F_1, . . . , F_k$ is a collection of disjoint essential decomposing disks in $D^2 \times I$ that each meet $A$ in at most $n$ points (hence the first Betti number of each $F_i$ is bounded above by $n$) and $k>c$, then at least two members $F_i$ and $F_j$ are parallel (note that none of the $F_i$ are boundary parallel since each is essential). 

\begin{figure}[h!]
\begin{picture}(90,250)
\put(14,0){\includegraphics[scale=.5]{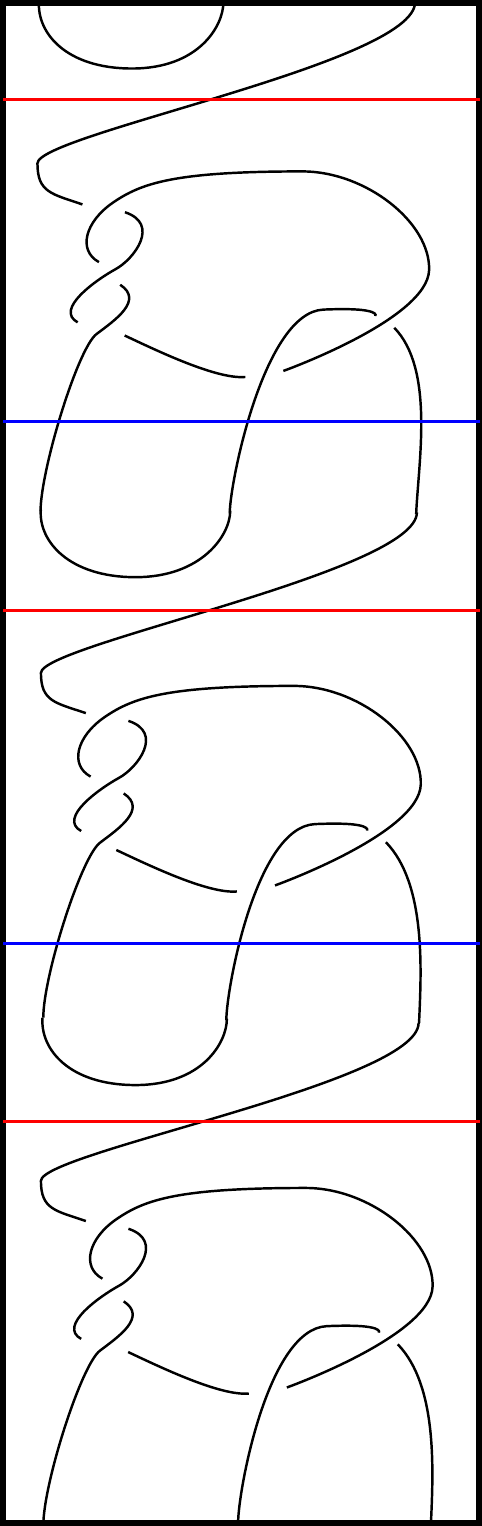}}
\put(0,204){$F_1$}
\put(0, 131){$F_2$}
\put(0, 55){$F_3$}
\put(85,209){$B$}
\put(85, 142){$B$}
\put(85, 67){$B$}
\put(85,179){$C$}
\put(85, 103){$C$}
\put(85, 28){$C$}
\end{picture}
\caption{An example of how three copies of an idempotent $A=B\circ C$ can be decomposed into one copy of $B$, two copies of $C\circ B$, and one copy of $C$.}
\label{pattern}
\end{figure}


Since we have established that $F$ is an essential punctured surface in $(D^2 \times I, A)$ and since $(D^2 \times I, A)$ is equivalent to $(D^2 \times I, A^{c+1})$, then we can find $c+1$ disjoint minimal decomposing disks, $F_1, . . . , F_{c+1}$, for $(D^2 \times I, A)$ each representing the copy of $F$ in each copy of $A$ in $A^{c+1}$.  Each of $F_1, . . . , F_{c+1}$ is essential in $(D^2 \times I, A)$ and together they decompose $(D^2 \times I, A)$ into one copy of $(D^2 \times I, B)$, $c$ copies of $(D^2 \times I, C\circ B)$, and one copy of $(D^2 \times I, C)$. See Figure \ref{pattern}. By Theorem \ref{Freedman}, there exist two members $F_i$ and $F_j$ that are parallel. The tangle between $F_i$ and $F_j$ in $D^2 \times I$ is equivalent to $(D^2 \times I, (C\circ B)^l)$ for some $l\geq 1$, however, since $F_i$ is parallel to $F_j$, then $(C\circ B)^l$ and, thus, $C\circ B$ is a braid.

Since $A^2=A$, then $B\circ C\circ B \circ C=B \circ C$. We can compose on the left by $C$ and the right by $B$ to obtain $(C\circ B)^3=(C\circ B)^2$. However, since braids on $n$ strands form a group under composition, $(C\circ B)^3=(C\circ B)^2$ implies $C\circ B$ is an identity morphism.
\end{proof}

\bibliographystyle{plain}
\bibliography{bib}

\end{document}